
\documentclass[reqno,onecolumn]{amsart}
\usepackage{amsfonts,amscd,amsthm}
\usepackage{amssymb}
\usepackage{amsmath}
\usepackage{graphicx}
\usepackage{amscd,amsthm}

\setcounter{MaxMatrixCols}{10}

\newtheorem{theorem}{Theorem}
\theoremstyle{plain}

\newtheorem{definition}{Definition}

\newtheorem{remark}{Remark}

\numberwithin{equation}{section}

\setlength{\textwidth}{6.5in}
\setlength{\textheight}{8.5in}
\setlength{\oddsidemargin}{0.0in}
\setlength{\evensidemargin}{0.0in}
\input{tcilatex}

\begin{document}

\begin{center}
\pagestyle{myheadings}\thispagestyle{empty}%
\markboth{\bf  N. Saba and A. Boussayoud}
{\bf Gaussian Mersenne Lucas numbers and polynomials}

\textbf{Gaussian Mersenne Lucas numbers and polynomials}

\textbf{Nabiha Saba}

LMAM Laboratory and Department of Mathematics,

Mohamed Seddik Ben Yahia University, Jijel, Algeria

\textbf{E-Mail:\ sabarnhf1994@gmail.com}

\textbf{Ali Boussayoud}$^{\ast }$

LMAM Laboratory and Department of Mathematics,

Mohamed Seddik Ben Yahia University, Jijel, Algeria

\textbf{E-Mail: aboussayoud@yahoo.fr}

$^{\ast }$\textbf{Corresponding author}

\textbf{\large Abstract}
\end{center}

\begin{quotation}
\qquad In the present article we introduce three new notions which are
called Gaussian Mersenne Lucas numbers, Mersenne Lucas polynomials and
Gaussian Mersenne Lucas polynomials. We present and prove our exciting
properties and results of them such as: recurrence relations, Binet's
formulas, explicit formulas, generating functions, symmetric functions and
negative extensions.
\end{quotation}

\noindent \textbf{2010 Mathematics Subject Classification. }Primary 05E05;
Secondary 11B39.

\noindent \textbf{Key Words and Phrases.} Gaussian Mersenne Lucas numbers;
Mersenne Lucas polynomials; Gaussian Mersenne Lucas polynomials; Binet's
formula; Generating function; Symmetric function; Explicit formula.

\section{\textbf{Introduction and backgrounds}}

In the existing literature, there has been a great interest in the study of
sequences of integers and their applications in various scientific domains.
Some of the sequences that has been extensively studied are the Fibonacci,
Lucas, Mersenne and Mersenne Lucas sequences.\newline
Mersenne Lucas sequence $\left\{ m_{n}\right\} _{n\geq 0}$ is given in \cite%
{NS} by the recurrence relation, 
\begin{equation*}
\left\{ 
\begin{array}{l}
m_{0}=2,\text{ }m_{1}=3 \\ 
m_{n}=3m_{n-1}-2m_{n-2},\text{ for }n\geq 2%
\end{array}%
\right. .
\end{equation*}%
The terms $m_{n}$ of this sequence are known as Mersenne Lucas numbers. Note
that the Mersenne Lucas numbers are given either by the explicit formula
(see \cite{NSaba 2}):%
\begin{equation*}
m_{n}=\tsum\limits_{j=0}^{\left\lfloor \frac{n}{2}\right\rfloor }\left(
-1\right) ^{j}\frac{n}{n-j}\left( 
\begin{array}{c}
n-j \\ 
j%
\end{array}%
\right) 3^{n-2j}2^{j}.
\end{equation*}%
or by the\ Binet's formula \cite{NS}:%
\begin{equation}
m_{n}=2^{n}+1,  \label{1.1}
\end{equation}%
replacing $\left( n\right) $ by $\left( -n\right) $ in the Binet's formula
we get the negative extension of Mersenne Lucas numbers as follows: 
\begin{equation}
m_{-n}=\frac{m_{n}}{2^{n}}.  \label{1.2}
\end{equation}

In literature, there\ have been so many studies of the sequences of Gaussian
numbers, for example Horadam in \cite{Horadam1} examined Fibonacci numbers
on the complex plane and established some interesting properties about them.
Further, Jordan in\ \cite{JORD} studied on Gaussian Fibonacci and Gaussian
Lucas numbers. In addition, Tasci defined and studied Gaussian Mersenne
numbers in \cite{tasc}. Moreover, studies on the different Gaussian
polynomials sequences like Gaussian Jacobsthal, Gaussian Jacobsthal Lucas,
Gaussian Pell and Gaussian Pell Lucas polynomials can be found in the papers 
\cite{ASC}, \cite{HALICI}, \cite{TY}.

\begin{definition}
\cite{Abderrezzak} Let $\lambda $\ and $\mu $ be any two alphabets. We
define $S_{n}(\lambda -\mu )$ by the following form:%
\begin{equation}
\frac{\tprod\limits_{\mu _{i}\in \mu }(1-\mu _{i}z)}{\tprod\limits_{\lambda
_{i}\in \lambda }(1-\lambda _{i}z)}=\sum\limits_{n=0}^{\infty }S_{n}(\lambda
-\mu )z^{n},  \label{1.3}
\end{equation}%
with the condition $S_{n}(\lambda -\mu )=0\ $for\ $n<0.$
\end{definition}

Equation (1.3) can be rewritten in the following form:%
\begin{equation*}
\dsum\limits_{n=0}^{\infty }S_{n}(\lambda -\mu )z^{n}=\left(
\dsum\limits_{n=0}^{\infty }S_{n}(\lambda )z^{n}\right) \times \left(
\dsum\limits_{n=0}^{\infty }S_{n}(-\mu )z^{n}\right) ,
\end{equation*}%
where%
\begin{equation*}
S_{n}(\lambda -\mu )=\dsum\limits_{j=0}^{n}S_{n-j}(-\mu )S_{j}(\lambda ).
\end{equation*}

\begin{remark}
Taking $\lambda =\left\{ 0\right\} $ in (1.3) gives:%
\begin{equation*}
\dsum\limits_{n=0}^{\infty }S_{n}(-\mu )z^{n}=\tprod\limits_{\mu _{i}\in \mu
}(1-\mu _{i}z).
\end{equation*}
\end{remark}

\begin{definition}
\cite{Biya3,nabiha} Let $n$ be positive integer and $\lambda =\left\{
\lambda _{1},\lambda _{2}\right\} \ $are set of given variables. Then, the $%
n^{th}$ symmetric function $S_{n}(\lambda _{1}+\lambda _{2})$ is defined by:%
\begin{equation*}
S_{n}(\lambda )=S_{n}(\lambda _{1}+\lambda _{2})=\frac{\lambda
_{1}^{n+1}-\lambda _{2}^{n+1}}{\lambda _{1}-\lambda _{2}},
\end{equation*}%
with%
\begin{eqnarray*}
S_{0}(\lambda ) &=&S_{0}(\lambda _{1}+\lambda _{2})=1, \\
S_{1}(\lambda ) &=&S_{1}(\lambda _{1}+\lambda _{2})=\lambda _{1}+\lambda
_{2}, \\
S_{2}(\lambda ) &=&S_{2}(\lambda _{1}+\lambda _{2})=\lambda _{1}^{2}+\lambda
_{1}\lambda _{2}+\lambda _{2}^{2}, \\
&&\vdots
\end{eqnarray*}
\end{definition}

This paper is organized as follows: In section 2, we define Gaussian
Mersenne Lucas numbers and we give their interesting properties, and
established relation between classical Mersenne Lucas numbers and Gaussian
Mersenne Lucas numbers. In section 3, we introduce Mersenne Lucas
polynomials and Gaussian Mersenne Lucas polynomials and we obtain some
results related to these polynomials like Binet's formula, explicit formula,
generating function, symmetric function and negative extension.

\section{\textbf{The Gaussian Mersenne Lucas numbers and some properties}}

In this section, we first give the fundamental definition of Gaussian
Mersenne Lucas numbers. Then, we obtain Binet's formula, explicit formula
and negative subscript for these numbers. We first give the following
definition.

\begin{definition}
Let\emph{\ }$n\geq 0$\emph{\ }be integer, the recurrence relation of
Gaussian Mersenne Lucas numbers $\left\{ Gm_{n}\right\} _{n\geq 0}$ is given
as:%
\begin{equation}
Gm_{n}:=\left\{ 
\begin{array}{c}
2+\frac{3i}{2},\text{ \ \ \ \ \ \ \ \ \ \ \ \ \ \ \ \ \ \ \ \ \ \ \ \ \ \ \
\ \ \ \ \ \ \ \ \ \ \ \ \ \ \textit{if} }n=0 \\ 
3+2i,\text{ \ \ \ \ \ \ \ \ \ \ \ \ \ \ \ \ \ \ \ \ \ \ \ \ \ \ \ \ \ \ \ \
\ \ \ \ \ \ \ \ \ if }n=1 \\ 
3Gm_{n-1}-2Gm_{n-2},\text{\ \ \ \ \ \ \ \ \ \ \ \ \ \ \ \ \ \ \ \ \ \ \ \ if 
}n\geq 2%
\end{array}%
\right. ,  \label{2.1}
\end{equation}
\end{definition}

It can be easily seen that:%
\begin{equation}
Gm_{n}=m_{n}+im_{n-1},\text{ for }n\geq 1,  \label{2.2}
\end{equation}%
with $m_{n}$ is the $n^{th}$ Mersenne Lucas numbers.

For later use the first few terms of Gaussian Mersenne Lucas numbers are as
shown in the following table:%
\begin{equation*}
\begin{tabular}{|c|c|c|c|c|c|c|c|}
\hline
$n$ & $0$ & $1$ & $2$ & $3$ & $4$ & $5$ & $...$ \\ \hline
$Gm_{n}$ & $2+\frac{3i}{2}$ & $3+2i$ & $5+3i$ & $9+5i$ & $17+9i$ & $33+17i$
& $...$ \\ \hline
\end{tabular}%
\end{equation*}

\begin{center}
\textbf{Table 1. }Gaussian Mersenne Lucas numbers for $0\leq n\leq 5.$
\end{center}

Let $\lambda _{1}$ and $\lambda _{2}$ be the roots of the characteristic
equation $\lambda ^{2}-3\lambda +2=0$ of the recurrence relation (2.1). Then%
\begin{equation*}
\lambda _{1}=2\text{ and }\lambda _{2}=1.
\end{equation*}

Now we can give the $n^{th}$ term of Gaussian Mersenne Lucas numbers.

\begin{theorem}
For $n\geq 0,$ the Binet's formula for Gaussian Mersenne Lucas numbers is
given by:%
\begin{equation}
Gm_{n}=2^{n}+1+i\left( 2^{n-1}+1\right) .  \label{2.3}
\end{equation}
\end{theorem}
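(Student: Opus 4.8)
The plan is to exploit the two facts already assembled just before the statement: the decomposition $Gm_{n}=m_{n}+im_{n-1}$ in (2.2), valid for $n\geq 1$, together with the classical Binet formula $m_{n}=2^{n}+1$ from (1.1). Substituting the latter into the former is essentially the whole argument. First I would treat the range $n\geq 1$. Writing $m_{n}=2^{n}+1$ and $m_{n-1}=2^{n-1}+1$ and plugging these into (2.2) gives
\begin{equation*}
Gm_{n}=m_{n}+im_{n-1}=\left( 2^{n}+1\right) +i\left( 2^{n-1}+1\right) ,
\end{equation*}
which is exactly (2.3). So for $n\geq 1$ the formula is immediate, with no real computation beyond a single substitution.

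The one subtlety is the endpoint $n=0$, because (2.2) is only asserted for $n\geq 1$ and a naive use would require $m_{-1}$. I would handle $n=0$ directly: the defining value $Gm_{0}=2+\tfrac{3i}{2}$ from (2.1) must be checked against the right-hand side of (2.3) evaluated at $n=0$, namely $2^{0}+1+i\left( 2^{-1}+1\right) =2+\tfrac{3i}{2}$. These agree, so the base case is secured and the formula holds on the full range $n\geq 0$. (If one prefers a uniform derivation, the negative extension (1.2) gives $m_{-1}=m_{1}/2=3/2$, so that (2.2) in fact also reproduces $Gm_{0}$ correctly; either way the $n=0$ case is the only point that needs separate attention.)

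An equally clean alternative, more in the spirit of the characteristic roots $\lambda_{1}=2$, $\lambda_{2}=1$ introduced above, is to solve the recurrence (2.1) directly. The general solution has the form $Gm_{n}=A\,2^{n}+B$; imposing the initial data $Gm_{0}=2+\tfrac{3i}{2}$ and $Gm_{1}=3+2i$ yields the linear system $A+B=2+\tfrac{3i}{2}$, $2A+B=3+2i$, whence $A=1+\tfrac{i}{2}$ and $B=1+i$. Expanding $\left(1+\tfrac{i}{2}\right)2^{n}+(1+i)=2^{n}+1+i\left(2^{n-1}+1\right)$ recovers (2.3). There is no genuine obstacle in this proof; the only care required is the bookkeeping at $n=0$ caused by the restricted range of (2.2), and a one-line verification that the proposed closed form satisfies the recurrence $Gm_{n}=3Gm_{n-1}-2Gm_{n-2}$ if one wishes to argue by induction rather than by the root method.
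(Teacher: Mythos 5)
Your proposal is correct, and your primary route differs from the paper's. The paper proves (2.3) by the characteristic-root method alone: it posits the general solution $Gm_{n}=c\,2^{n}+d$, imposes the initial data $Gm_{0}=2+\tfrac{3i}{2}$ and $Gm_{1}=3+2i$, solves for $c=1+\tfrac{i}{2}$ and $d=1+i$, and expands --- exactly your second, alternative argument (incidentally, the paper's displayed system contains a typo, writing $2c+d=1$ where $2c+d=3+2i$ is meant, though the solved values are consistent with the correct system). Your main argument instead substitutes the classical Binet formula $m_{n}=2^{n}+1$ from (1.1) into the decomposition $Gm_{n}=m_{n}+im_{n-1}$ of (2.2); this is the same device the paper itself uses later for the negative extension and the explicit formula (Theorems 2 and 3), so it is entirely in the spirit of the text. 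What your route buys is a one-line derivation for $n\geq 1$ at the cost of a separate check at $n=0$, which you handle correctly both by direct evaluation ($2^{0}+1+i(2^{-1}+1)=2+\tfrac{3i}{2}$) and via $m_{-1}=\tfrac{3}{2}$ from (1.2); the paper's route buys uniformity in $n$ at the cost of solving a small linear system. Both are complete; there is no gap.
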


\begin{proof}
We know that the general solution for the recurrence relation of Gaussian
Mersenne Lucas numbers given by:%
\begin{equation*}
Gm_{n}=c2^{n}+d,
\end{equation*}%
where $c$ and $d$ are the coefficients.

Using the initial values $Gm_{0}=2+\frac{3i}{2}$ and $Gm_{1}=3+2i,$ we
obtain:%
\begin{equation*}
c+d=2+\frac{3i}{2}\text{ and }2c+d=1.
\end{equation*}%
By these equalities we get:%
\begin{equation*}
c=1+\frac{i}{2}\text{ and }d=1+i.
\end{equation*}%
Therefore:%
\begin{eqnarray*}
Gm_{n} &=&\left( 1+\frac{i}{2}\right) 2^{n}+1+i \\
&=&2^{n}+1+i\left( 2^{n-1}+1\right) .
\end{eqnarray*}%
As required.
\end{proof}

The negative extension of Gaussian Mersenne Lucas numbers $\left(
Gm_{-n}\right) $\ gives in the next theorem.

\begin{theorem}
Let $n$ be any positive integer. Then we have:%
\begin{equation}
Gm_{-n}=\frac{1}{2^{n}}\left[ m_{n}+\frac{i}{2}m_{n+1}\right] ,  \label{2.4}
\end{equation}%
with $m_{n}$ is the $n^{th}$Mersenne Lucas numbers.
\end{theorem}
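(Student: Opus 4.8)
The plan is to derive the formula directly from the Binet representation (2.3), since the negative extension is simply the result of replacing $n$ by $-n$ in that closed form (exactly as was done for the classical Mersenne Lucas numbers to pass from (1.1) to (1.2)). First I would substitute $-n$ for $n$ in (2.3) to obtain
\begin{equation*}
Gm_{-n}=2^{-n}+1+i\left( 2^{-n-1}+1\right) .
\end{equation*}
This is the left-hand side expressed in terms of powers of $2$, and it is the quantity I must match against the proposed right-hand side.

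Next I would expand the target expression $\frac{1}{2^{n}}\left[ m_{n}+\frac{i}{2}m_{n+1}\right]$ using the Mersenne Lucas Binet formula (1.1), namely $m_{n}=2^{n}+1$ and $m_{n+1}=2^{n+1}+1$. Substituting these gives
\begin{equation*}
\frac{1}{2^{n}}\left[ \left( 2^{n}+1\right) +\frac{i}{2}\left( 2^{n+1}+1\right) \right] =1+2^{-n}+i+\frac{i}{2}\cdot 2^{-n},
\end{equation*}
where the key simplification is $\frac{1}{2^{n}}\cdot \frac{i}{2}\cdot 2^{n+1}=i$. Regrouping the real and imaginary contributions yields $2^{-n}+1+i\left( 2^{-n-1}+1\right)$, which coincides term for term with the expression for $Gm_{-n}$ obtained in the first step, completing the argument.

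The computation is entirely routine, so there is no genuine conceptual obstacle; the only point that demands care is the bookkeeping of the exponents of $2$ — in particular confirming that the factor $2^{-n}$ multiplying $m_{n+1}=2^{n+1}+1$ produces the single power $2^{-n-1}$ in the imaginary part rather than $2^{-n}$ or $2^{n+1}$. One could alternatively verify the identity by checking that both sides satisfy the recurrence (2.1) run backwards, but invoking the already-established Binet formula (2.3) together with (1.1) is the most economical route and is what I would present.
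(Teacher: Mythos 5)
Your computation is correct: substituting $-n$ into the Binet formula \eqref{2.3} gives $Gm_{-n}=2^{-n}+1+i\left(2^{-n-1}+1\right)$, and expanding $\frac{1}{2^{n}}\left[m_{n}+\frac{i}{2}m_{n+1}\right]$ via $m_{n}=2^{n}+1$ yields exactly the same quantity, so the identity holds. Your route is, however, not the one the paper takes. The paper never touches the Binet formula here; instead it extends the decomposition \eqref{2.2}, $Gm_{n}=m_{n}+im_{n-1}$, to negative indices, writes $Gm_{-n}=m_{-n}+im_{-n-1}$, and then invokes the known negative extension $m_{-n}=m_{n}/2^{n}$ from \eqref{1.2} to land on the right-hand side directly. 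The two arguments buy slightly different things: the paper's version is structural (it shows the formula is forced by the real/imaginary Mersenne Lucas decomposition together with the classical negative extension), but it silently applies \eqref{2.2} outside its stated range $n\geq 1$; your version sidesteps that by working purely from the closed form \eqref{2.3} and \eqref{1.1}, at the cost of being a verification rather than a derivation. Both hinge on the same implicit convention — that $Gm_{-n}$ is \emph{defined} by analytic continuation of a formula valid for $n\geq 0$ — which neither you nor the paper makes fully explicit; a one-line remark fixing that definition (e.g.\ via the recurrence \eqref{2.1} run backwards, under which both closed forms remain valid) would make either proof airtight.
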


\begin{proof}
By \cite{NS}, we have the negative extension of Mersenne Lucas numbers is
given by:%
\begin{equation*}
m_{-n}=\frac{m_{n}}{2^{n}}.
\end{equation*}

By using the Eq. (2.2), we easily obtain:%
\begin{eqnarray*}
Gm_{-n} &=&m_{-n}+im_{-n-1} \\
&=&\frac{m_{n}}{2^{n}}+i\frac{m_{n+1}}{2^{n+1}} \\
&=&\frac{1}{2^{n}}\left[ m_{n}+\frac{i}{2}m_{n+1}\right] .
\end{eqnarray*}

Hence, we obtain the desired result.
\end{proof}

Now, we aim to give the explicit formula for Gaussian Mersenne Lucas
numbers. For this purpose, we shall prove the following theorem.

\begin{theorem}
The explicit formula for Gaussian Mersenne Lucas numbers is given by:%
\begin{equation}
Gm_{n}=\tsum\limits_{j=0}^{\left\lfloor \frac{n}{2}\right\rfloor }\left(
-1\right) ^{j}\frac{n}{n-j}\left( 
\begin{array}{c}
n-j \\ 
j%
\end{array}%
\right) 3^{n-2j}2^{j}+i\tsum\limits_{j=0}^{\left\lfloor \frac{n-1}{2}%
\right\rfloor }\left( -1\right) ^{j}\frac{n-1}{n-j-1}\left( 
\begin{array}{c}
n-j-1 \\ 
j%
\end{array}%
\right) 3^{n-2j-1}2^{j}.  \label{2.5}
\end{equation}
\end{theorem}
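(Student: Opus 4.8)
The plan is to reduce the statement to the explicit (Lucas-type) formula for the classical Mersenne Lucas numbers recorded in the introduction, by invoking the decomposition \eqref{2.2}. First I would recall that for $n\ge 1$ equation \eqref{2.2} gives $Gm_{n}=m_{n}+i\,m_{n-1}$, so that the real part of $Gm_{n}$ is exactly $m_{n}$ and the coefficient of $i$ is $m_{n-1}$. Hence it suffices to substitute the known explicit formula for each of these two Mersenne Lucas numbers and read off the result.

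For the real part, I would insert the formula displayed in the introduction, namely $m_{n}=\sum_{j=0}^{\lfloor n/2\rfloor}(-1)^{j}\frac{n}{n-j}\binom{n-j}{j}3^{n-2j}2^{j}$, which reproduces verbatim the first sum in \eqref{2.5}. For the imaginary part I would apply the same formula with $n$ replaced by $n-1$; this yields $m_{n-1}=\sum_{j=0}^{\lfloor (n-1)/2\rfloor}(-1)^{j}\frac{n-1}{n-j-1}\binom{n-j-1}{j}3^{n-2j-1}2^{j}$, which is precisely the coefficient of $i$ in \eqref{2.5}. Adding the two contributions, weighted by $1$ and $i$ respectively, yields the asserted identity.

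Since the argument is essentially a direct substitution, I do not expect a deep obstacle; the only points requiring care are the index bookkeeping in the shift $n\mapsto n-1$. I would verify that the upper summation limit becomes $\lfloor (n-1)/2\rfloor$ and that the exponents and binomial entries transform correctly, i.e. $3^{n-2j}\to 3^{n-1-2j}=3^{n-2j-1}$, $\binom{n-j}{j}\to\binom{n-1-j}{j}$, and the rational prefactor $\frac{n}{n-j}\to\frac{n-1}{n-j-1}$. I would also note that \eqref{2.2}, and therefore this derivation, is stated for $n\ge 1$; the base value $Gm_{0}=2+\tfrac{3i}{2}$ is exceptional, since the factor $\frac{n}{n-j}$ degenerates at $n=0$, and it is instead fixed directly by the defining recurrence \eqref{2.1}.
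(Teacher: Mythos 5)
Your proposal is correct and follows exactly the paper's own argument: substitute the known explicit formula for $m_{n}$ (and its shift to $m_{n-1}$) into the decomposition $Gm_{n}=m_{n}+i\,m_{n-1}$ from \eqref{2.2}. Your additional remark about the $n=0$ case being exceptional is a sensible precaution that the paper itself does not spell out, but it does not change the substance of the proof.
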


\begin{proof}
By \cite{NSaba 2}, we have the explicit formula of the $n^{th}$ Mersenne
Lucas numbers is given by:%
\begin{equation*}
m_{n}=\tsum\limits_{j=0}^{\left\lfloor \frac{n}{2}\right\rfloor }\left(
-1\right) ^{j}\frac{n}{n-j}\left( 
\begin{array}{c}
n-j \\ 
j%
\end{array}%
\right) 3^{n-2j}2^{j}.
\end{equation*}%
By using the Eq. (2.2), we easily obtain: 
\begin{eqnarray*}
Gm_{n} &=&m_{n}+im_{n-1} \\
&=&\tsum\limits_{j=0}^{\left\lfloor \frac{n}{2}\right\rfloor }\left(
-1\right) ^{j}\frac{n}{n-j}\left( 
\begin{array}{c}
n-j \\ 
j%
\end{array}%
\right) 3^{n-2j}2^{j}+i\tsum\limits_{j=0}^{\left\lfloor \frac{n-1}{2}%
\right\rfloor }\left( -1\right) ^{j}\frac{n-1}{n-j-1}\left( 
\begin{array}{c}
n-j-1 \\ 
j%
\end{array}%
\right) 3^{n-2j-1}2^{j}.
\end{eqnarray*}%
As required.
\end{proof}

\begin{theorem}
For $n\in 
\mathbb{N}
,$ the generating function of Gaussian Mersenne Lucas numbers is given by:%
\begin{equation}
\sum_{n=0}^{\infty }Gm_{n}z^{n}=\frac{4+3i-\left( 6+5i\right) z}{2-6z+4z^{2}}%
.  \label{2.6}
\end{equation}
\end{theorem}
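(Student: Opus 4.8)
The plan is to exploit the linear recurrence $Gm_{n}=3Gm_{n-1}-2Gm_{n-2}$ in the standard way one derives a rational generating function for a sequence satisfying a constant-coefficient linear recurrence. Set $G(z)=\sum_{n=0}^{\infty}Gm_{n}z^{n}$ and consider the product $(1-3z+2z^{2})\,G(z)$, where $1-3z+2z^{2}$ is the reversal of the characteristic polynomial $\lambda^{2}-3\lambda+2$ of the recurrence (2.1).

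First I would form the three shifted series $G(z)$, $\;3z\,G(z)=\sum_{n\geq 1}3Gm_{n-1}z^{n}$, and $\;2z^{2}G(z)=\sum_{n\geq 2}2Gm_{n-2}z^{n}$, being careful with the index shifts. Subtracting, the coefficient of $z^{n}$ for every $n\geq 2$ becomes $Gm_{n}-3Gm_{n-1}+2Gm_{n-2}$, which vanishes identically by the recurrence. Hence the entire tail collapses and only the $n=0$ and $n=1$ contributions survive, yielding
\[
(1-3z+2z^{2})\,G(z)=Gm_{0}+(Gm_{1}-3Gm_{0})z.
\]

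Next I would substitute the initial values $Gm_{0}=2+\tfrac{3i}{2}$ and $Gm_{1}=3+2i$. A short computation gives $Gm_{1}-3Gm_{0}=-3-\tfrac{5i}{2}$, so that
\[
G(z)=\frac{\left(2+\tfrac{3i}{2}\right)-\left(3+\tfrac{5i}{2}\right)z}{1-3z+2z^{2}}.
\]
Finally, multiplying numerator and denominator by $2$ to clear the halves produces exactly the form (2.6).

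The main obstacle, such as it is, is purely bookkeeping: keeping the index shifts aligned so that the telescoping is exact, and correctly combining the complex, half-integer initial data (in particular the $\tfrac{3i}{2}$ in $Gm_{0}$, which is what forces the factor $2$ in the final denominator). There is no genuine analytic difficulty, since the identity (2.6) is a formal one between power series and convergence near $z=0$ is automatic for a sequence of at most exponential growth.
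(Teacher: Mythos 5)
Your proposal is correct and follows essentially the same route as the paper: multiply $G(z)$ by $1-3z+2z^{2}$, let the recurrence annihilate the tail for $n\geq 2$, and substitute the initial values $Gm_{0}=2+\tfrac{3i}{2}$, $Gm_{1}=3+2i$ to get $Gm_{1}-3Gm_{0}=-3-\tfrac{5i}{2}$ and hence (2.6) after clearing denominators. No differences worth noting.
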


\begin{proof}
The Gaussian Mersenne Lucas numbers can be considered as the coefficients of
the formal power series:%
\begin{equation*}
g(z)=\sum_{n=0}^{\infty }Gm_{n}z^{n}.
\end{equation*}%
Using the initial conditions, we get:%
\begin{eqnarray*}
g(z) &=&Gm_{0}+Gm_{1}z+\sum_{n=2}^{\infty }Gm_{n}z^{n} \\
&=&Gm_{0}+Gm_{1}z+\sum_{n=2}^{\infty }\left( 3Gm_{n-1}-2Gm_{n-2}\right) z^{n}
\\
&=&Gm_{0}+Gm_{1}z+3z\sum_{n=1}^{\infty }Gm_{n}z^{n}-2z^{2}\sum_{n=0}^{\infty
}Gm_{n}z^{n} \\
&=&Gm_{0}+\left( Gm_{1}-3Gm_{0}\right) z+3z\sum_{n=0}^{\infty
}Gm_{n}z^{n}-2z^{2}\sum_{n=0}^{\infty }Gm_{n}z^{n} \\
&=&2+\frac{3i}{2}+\left( \frac{5i}{2}-3\right) z+\left( 3z-2z^{2}\right)
g(z).
\end{eqnarray*}%
Hence, we obtain:%
\begin{equation*}
\left( 1-3z+2z^{2}\right) g(z)=2+\frac{3i}{2}-\left( \frac{5i}{2}+3\right) z.
\end{equation*}%
Therefore:%
\begin{equation*}
g(z)=\frac{4+3i-\left( 6+5i\right) z}{2-6z+4z^{2}}.
\end{equation*}%
Hence, we obtain the desired result.
\end{proof}

\begin{theorem}
For $n\in 
\mathbb{N}
,$ the symmetric function\textbf{\ }of Gaussian Mersenne Lucas numbers is
given by:%
\begin{equation}
Gm_{n}=\left( 2+\frac{3i}{2}\right) S_{n}\left( \lambda _{1}+\left[ -\lambda
_{2}\right] \right) -\left( \frac{5i}{2}+3\right) S_{n-1}\left( \lambda _{1}+%
\left[ -\lambda _{2}\right] \right) .  \label{2.7}
\end{equation}%
with $\lambda _{1}=2$ and $\lambda _{2}=1.$
\end{theorem}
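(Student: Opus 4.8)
The plan is to read the identity off the generating function established just above in Eq.~(2.6), since the symmetric function $S_{n}\left(\lambda_{1}+\left[-\lambda_{2}\right]\right)$ is by (1.3) precisely the coefficient sequence of the rational series $\frac{1}{(1-\lambda_{1}z)(1-\lambda_{2}z)}$. Thus the whole statement reduces to factoring $\sum_{n}Gm_{n}z^{n}$ as a linear polynomial in $z$ times that series, and then matching coefficients of $z^{n}$.

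First I would rewrite the two sides of (2.6) so that the denominator becomes the generating denominator of the symmetric functions. Since $\lambda_{1}=2$ and $\lambda_{2}=1$ are the roots of $\lambda^{2}-3\lambda+2$, we have
\[
2-6z+4z^{2}=2\left(1-3z+2z^{2}\right)=2(1-\lambda_{1}z)(1-\lambda_{2}z),
\]
and pulling the same factor $2$ out of the numerator gives $4+3i-(6+5i)z=2\left[\left(2+\frac{3i}{2}\right)-\left(3+\frac{5i}{2}\right)z\right]$. Cancelling the common factor $2$ in (2.6) therefore yields
\[
\sum_{n=0}^{\infty}Gm_{n}z^{n}=\frac{\left(2+\frac{3i}{2}\right)-\left(3+\frac{5i}{2}\right)z}{(1-\lambda_{1}z)(1-\lambda_{2}z)}.
\]

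Next I would invoke the generating function of the symmetric functions, namely $\sum_{n=0}^{\infty}S_{n}\left(\lambda_{1}+\left[-\lambda_{2}\right]\right)z^{n}=\frac{1}{(1-\lambda_{1}z)(1-\lambda_{2}z)}$, which follows from (1.3) with the alphabet determined by $\lambda_{1},\lambda_{2}$. Substituting this into the displayed identity gives
\[
\sum_{n=0}^{\infty}Gm_{n}z^{n}=\left[\left(2+\frac{3i}{2}\right)-\left(3+\frac{5i}{2}\right)z\right]\sum_{n=0}^{\infty}S_{n}\left(\lambda_{1}+\left[-\lambda_{2}\right]\right)z^{n}.
\]
Expanding the right-hand side as a Cauchy product and using the index shift $z\sum_{n}S_{n}z^{n}=\sum_{n}S_{n-1}z^{n}$ (valid because $S_{-1}=0$), the coefficient of $z^{n}$ is exactly $\left(2+\frac{3i}{2}\right)S_{n}\left(\lambda_{1}+\left[-\lambda_{2}\right]\right)-\left(\frac{5i}{2}+3\right)S_{n-1}\left(\lambda_{1}+\left[-\lambda_{2}\right]\right)$, which is the asserted formula (2.7). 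Comparing coefficients of $z^{n}$ on both sides completes the argument.

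The computation is essentially mechanical, so there is no genuine obstacle; the only points demanding care are the bookkeeping of the two factors of $2$ when simplifying (2.6) and the index shift that converts multiplication by $z$ into the $S_{n-1}$ term. As a consistency check I would evaluate the right-hand side at $n=0,1$, using $S_{-1}=0$, $S_{0}\left(\lambda_{1}+\left[-\lambda_{2}\right]\right)=1$ and $S_{1}\left(\lambda_{1}+\left[-\lambda_{2}\right]\right)=\lambda_{1}+\lambda_{2}=3$, and confirm that the values $2+\frac{3i}{2}$ and $3+2i$ agree with Table~1, which pins down the constants $2+\frac{3i}{2}$ and $\frac{5i}{2}+3$.
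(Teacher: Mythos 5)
Your proposal is correct and follows essentially the same route as the paper: both arguments combine the generating function (2.6) with the identity $\sum_{n\geq 0}S_{n}\left(\lambda_{1}+\left[-\lambda_{2}\right]\right)z^{n}=\frac{1}{1-3z+2z^{2}}$ and conclude by comparing coefficients of $z^{n}$. The only cosmetic difference is direction — you factor the series $\sum Gm_{n}z^{n}$ into a linear polynomial times the symmetric-function series, while the paper assembles the linear combination $\left(2+\frac{3i}{2}\right)\sum S_{n}z^{n}-\left(\frac{5i}{2}+3\right)\sum S_{n-1}z^{n}$ and identifies it with (2.6) — which is the same computation read in reverse.
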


\begin{proof}
By \cite{Bouss5}, we have:%
\begin{equation}
\dsum\limits_{n=0}^{\infty }S_{n}\left( \lambda _{1}+\left[ -\lambda _{2}%
\right] \right) z^{n}=\frac{1}{1-\left( \lambda _{1}-\lambda _{2}\right)
z-\lambda _{1}\lambda _{2}z^{2}},\text{ with }\lambda =\left\{ \lambda
_{1},\lambda _{2}\right\} \text{\ an alphabet.}  \label{2.8}
\end{equation}%
From this equation we get:%
\begin{equation}
\dsum\limits_{n=0}^{\infty }S_{n-1}\left( \lambda _{1}+\left[ -\lambda _{2}%
\right] \right) z^{n}=\frac{z}{1-\left( \lambda _{1}-\lambda _{2}\right)
z-\lambda _{1}\lambda _{2}z^{2}}.  \label{2.9}
\end{equation}%
Setting $\lambda _{1}-\lambda _{2}=3$ and $\lambda _{1}\lambda _{2}=-2$ in
the Eqs. (2.8) and (2.9), we get:%
\begin{eqnarray}
\sum_{n=0}^{\infty }S_{n}\left( \lambda _{1}+\left[ -\lambda _{2}\right]
\right) z^{n} &=&\frac{1}{1-3z+2z^{2}},  \label{2.10} \\
\sum_{n=0}^{\infty }S_{n-1}\left( \lambda _{1}+\left[ -\lambda _{2}\right]
\right) z^{n} &=&\frac{z}{1-3z+2z^{2}}.  \label{2.11}
\end{eqnarray}

Multiplying the equation (2.10) by $\left( 2+\frac{3i}{2}\right) $ and
adding it to the equation obtained by (2.11) multiplying by $\left( -\frac{5i%
}{2}-3\right) $, then we obtain the following equality:%
\begin{eqnarray*}
\sum_{n=0}^{\infty }\left( \left( 2+\frac{3i}{2}\right) S_{n}\left( \lambda
_{1}+\left[ -\lambda _{2}\right] \right) -\left( \frac{5i}{2}+3\right)
S_{n-1}\left( \lambda _{1}+\left[ -\lambda _{2}\right] \right) \right) z^{n}
&=&\frac{4+3i-\left( 6+5i\right) z}{2-6z+4z^{2}} \\
&=&\sum_{n=0}^{\infty }Gm_{n}z^{n}.
\end{eqnarray*}%
Comparing of the coefficients of $z^{n}$ we obtain:%
\begin{equation*}
Gm_{n}=\left( 2+\frac{3i}{2}\right) S_{n}\left( \lambda _{1}+\left[ -\lambda
_{2}\right] \right) -\left( \frac{5i}{2}+3\right) S_{n-1}\left( \lambda _{1}+%
\left[ -\lambda _{2}\right] \right) .
\end{equation*}%
This completes the proof.
\end{proof}

Now, we aim to give the generating functions for odd and even Gaussian
Mersenne Lucas numbers.

\begin{theorem}
For $n\in 
\mathbb{N}
,$ the generating functions of even and odd Gaussian Mersenne Lucas numbers
are respectively given by:%
\begin{eqnarray}
\sum\limits_{n=0}^{\infty }Gm_{2n}z^{n} &=&\frac{4+3i-\left( 10+9i\right) z}{%
2-10z+8z^{2}},  \label{2.13} \\
\sum\limits_{n=0}^{\infty }Gm_{2n+1}z^{n} &=&\frac{6+4i-\left( 12+10i\right)
z}{2-10z+8z^{2}}.  \label{2.14}
\end{eqnarray}
\end{theorem}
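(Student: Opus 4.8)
The plan is to derive both generating functions directly from the Binet formula (2.3), since collapsing the series into closed geometric form is the most transparent route and automatically produces the required quadratic denominator.

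First I would specialize (2.3) to even and odd indices. Writing $2^{2n}=4^{n}$ and $2^{2n-1}=\tfrac12\,4^{n}$, formula (2.3) gives
\[
Gm_{2n}=4^{n}+1+i\Bigl(\tfrac12\,4^{n}+1\Bigr),\qquad
Gm_{2n+1}=2\cdot 4^{n}+1+i\bigl(4^{n}+1\bigr).
\]
In each case the whole $n$-dependence sits in the two geometric pieces $4^{n}$ and $1^{n}$, so multiplying by $z^{n}$ and summing reduces everything to the two elementary series $\sum_{n\geq0}4^{n}z^{n}=\frac{1}{1-4z}$ and $\sum_{n\geq0}z^{n}=\frac{1}{1-z}$.

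Next I would assemble the resulting fractions over the common denominator $(1-4z)(1-z)=1-5z+4z^{2}$, treating the real and imaginary contributions separately. For the even case this yields real part $\frac{2-5z}{1-5z+4z^{2}}$ and imaginary part $\frac{\frac32-\frac92 z}{1-5z+4z^{2}}$; combining them and multiplying numerator and denominator by $2$ clears the halves and produces $\frac{(4+3i)-(10+9i)z}{2-10z+8z^{2}}$, which is (2.13). The odd case is identical in structure, giving real part $\frac{3-6z}{1-5z+4z^{2}}$ and imaginary part $\frac{2-5z}{1-5z+4z^{2}}$, hence $\frac{(6+4i)-(12+10i)z}{2-10z+8z^{2}}$, which is (2.14).

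As an alternative that mirrors the proof of Theorem (2.6): since $Gm_{n}=c\,2^{n}+d$, both subsequences $\left(Gm_{2n}\right)$ and $\left(Gm_{2n+1}\right)$ have characteristic roots $4$ and $1$, hence satisfy the recurrence $a_{n}=5a_{n-1}-4a_{n-2}$. One can then repeat verbatim the generating-function manipulation of (2.6), feeding in the initial data $Gm_{0},Gm_{2}$ (respectively $Gm_{1},Gm_{3}$) from Table 1; the only arithmetic to verify is $Gm_{2}-5Gm_{0}=-5-\tfrac92 i$ and $Gm_{3}-5Gm_{1}=-6-5i$, which land directly in the numerators. There is no genuine obstacle in either route; the only point demanding care is the bookkeeping of the factor $\tfrac12$ arising from $2^{2n-1}$ in the even case, together with the final normalization that rescales the denominator $1-5z+4z^{2}$ to $2-10z+8z^{2}$ so as to match the constant terms $4+3i$ and $6+4i$ asserted in (2.13)–(2.14).
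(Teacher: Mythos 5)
Your proof is correct, and all the arithmetic checks out (including the specialization of the Binet formula at $n=0$, where $Gm_{0}=2+\tfrac{3i}{2}$ agrees with $4^{0}+1+i\bigl(\tfrac12+1\bigr)$, and the auxiliary values $Gm_{2}-5Gm_{0}=-5-\tfrac92 i$ and $Gm_{3}-5Gm_{1}=-6-5i$). However, your route is genuinely different from the paper's. You work directly from the Binet formula $Gm_{n}=2^{n}+1+i\left(2^{n-1}+1\right)$, split into even and odd indices, and reduce everything to the two geometric series $\sum 4^{n}z^{n}$ and $\sum z^{n}$ over the common denominator $1-5z+4z^{2}$. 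The paper instead starts from its symmetric-function representation (2.7), $Gm_{n}=\left(2+\tfrac{3i}{2}\right)S_{n}\left(\lambda_{1}+\left[-\lambda_{2}\right]\right)-\left(\tfrac{5i}{2}+3\right)S_{n-1}\left(\lambda_{1}+\left[-\lambda_{2}\right]\right)$, substitutes $2n$ and $2n+1$ for $n$, and invokes the bisection generating functions for $S_{2n-1}$, $S_{2n}$, $S_{2n+1}$ quoted from an external reference, which supply the same denominator $1-5z+4z^{2}$. Your argument is more elementary and entirely self-contained, exploiting the fact that the characteristic roots here are the explicit integers $2$ and $1$; the paper's argument is less direct but stays within its symmetric-function framework and would carry over unchanged to sequences whose roots are not so conveniently explicit. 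Your closing remark that both bisected subsequences satisfy $a_{n}=5a_{n-1}-4a_{n-2}$ is also a valid third route, mirroring the proof of the ordinary generating function (2.6).
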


\begin{proof}
By \cite{nana}, we have:%
\begin{eqnarray}
\sum\limits_{n=0}^{\infty }S_{2n-1}\left( \lambda _{1}+\left[ -\lambda _{2}%
\right] \right) z^{n} &=&\frac{\left( \lambda _{1}-\lambda _{2}\right) z}{%
1-\left( \left( \lambda _{1}-\lambda _{2}\right) ^{2}+2\lambda _{1}\lambda
_{2}\right) z+\lambda _{1}^{2}\lambda _{2}^{2}z^{2}},  \label{2.15} \\
\sum\limits_{n=0}^{\infty }S_{2n}\left( \lambda _{1}+\left[ -\lambda _{2}%
\right] \right) z^{n} &=&\frac{1-\lambda _{1}\lambda _{2}z}{1-\left( \left(
\lambda _{1}-\lambda _{2}\right) ^{2}+2\lambda _{1}\lambda _{2}\right)
z+\lambda _{1}^{2}\lambda _{2}^{2}z^{2}},  \label{2.16} \\
\sum\limits_{n=0}^{\infty }S_{2n+1}\left( \lambda _{1}+\left[ -\lambda _{2}%
\right] \right) z^{n} &=&\frac{\lambda _{1}-\lambda _{2}}{1-\left( \left(
\lambda _{1}-\lambda _{2}\right) ^{2}+2\lambda _{1}\lambda _{2}\right)
z+\lambda _{1}^{2}\lambda _{2}^{2}z^{2}},  \label{2.17}
\end{eqnarray}%
with $\lambda =\left\{ \lambda _{1},\lambda _{2}\right\} $\ an alphabet.
Setting $\lambda _{1}-\lambda _{2}=3$ and $\lambda _{1}\lambda _{2}=-2$ in
the Eqs. (2.15), (2.16) and (2.17), we get:%
\begin{eqnarray}
\sum\limits_{n=0}^{\infty }S_{2n-1}\left( \lambda _{1}+\left[ -\lambda _{2}%
\right] \right) z^{n} &=&\frac{3z}{1-5z+4z^{2}},  \label{2.18} \\
\sum\limits_{n=0}^{\infty }S_{2n}\left( \lambda _{1}+\left[ -\lambda _{2}%
\right] \right) z^{n} &=&\frac{1+2z}{1-5z+4z^{2}},  \label{2.19} \\
\sum\limits_{n=0}^{\infty }S_{2n+1}\left( \lambda _{1}+\left[ -\lambda _{2}%
\right] \right) z^{n} &=&\frac{3}{1-5z+4z^{2}}.  \label{2.20}
\end{eqnarray}%
Writing $\left( 2n\right) $ instead of $\left( n\right) $ in the Eq. (2.7),
we get:%
\begin{eqnarray*}
\sum\limits_{n=0}^{\infty }Gm_{2n}z^{n} &=&\sum\limits_{n=0}^{\infty }\left(
\left( 2+\frac{3i}{2}\right) S_{2n}\left( \lambda _{1}+\left[ -\lambda _{2}%
\right] \right) -\left( \frac{5i}{2}+3\right) S_{2n-1}\left( \lambda _{1}+%
\left[ -\lambda _{2}\right] \right) \right) z^{n} \\
&=&\left( 2+\frac{3i}{2}\right) \sum\limits_{n=0}^{\infty }S_{2n}\left(
\lambda _{1}+\left[ -\lambda _{2}\right] \right) z^{n}-\left( \frac{5i}{2}%
+3\right) \sum\limits_{n=0}^{\infty }S_{2n-1}\left( \lambda _{1}+\left[
-\lambda _{2}\right] \right) z^{n} \\
&=&\frac{\left( 4+3i\right) \left( 1+2z\right) }{2\left( 1-5z+4z^{2}\right) }%
-\frac{\left( 6+5i\right) 3z}{2\left( 1-5z+4z^{2}\right) } \\
&=&\frac{4+3i-\left( 10+9i\right) z}{2-10z+8z^{2}}.
\end{eqnarray*}

Which is the generating function of even Gaussian Mersenne Lucas numbers.%
\newline
Substituting\ $n$\ by $\left( 2n+1\right) $ in the Eq. (2.7), we obtain:%
\begin{eqnarray*}
\sum\limits_{n=0}^{\infty }Gm_{2n+1}z^{n} &=&\sum\limits_{n=0}^{\infty
}\left( \left( 2+\frac{3i}{2}\right) S_{2n+1}\left( \lambda _{1}+\left[
-\lambda _{2}\right] \right) -\left( \frac{5i}{2}+3\right) S_{2n}\left(
\lambda _{1}+\left[ -\lambda _{2}\right] \right) \right) z^{n} \\
&=&\left( 2+\frac{3i}{2}\right) \sum\limits_{n=0}^{\infty }S_{2n+1}\left(
\lambda _{1}+\left[ -\lambda _{2}\right] \right) z^{n}-\left( \frac{5i}{2}%
+3\right) \sum\limits_{n=0}^{\infty }S_{2n}\left( \lambda _{1}+\left[
-\lambda _{2}\right] \right) z^{n} \\
&=&\frac{3\left( 4+3i\right) }{2\left( 1-5z+4z^{2}\right) }-\frac{\left(
6+5i\right) \left( 1+2z\right) }{2\left( 1-5z+4z^{2}\right) } \\
&=&\frac{6+4i-\left( 12+10i\right) z}{2-10z+8z^{2}}.
\end{eqnarray*}%
Which is the generating function of odd Gaussian Mersenne Lucas numbers.
\end{proof}

\section{\textbf{Mersenne Lucas polynomials and Gaussian Mersenne Lucas
polynomials and their some interesting properties}}

In this section, we give the definitions of Mersenne Lucas and Gaussian
Mersenne Lucas polynomials, and we obtain some interesting properties.

\begin{definition}
Let\emph{\ }$n\geq 0$\emph{\ }be integer, the recurrence relation of
Mersenne Lucas polynomials $\left\{ m_{n}\left( x\right) \right\} _{n\geq 0}$
is given as:%
\begin{equation}
m_{n}\left( x\right) :=\left\{ 
\begin{array}{c}
2,\text{ \ \ \ \ \ \ \ \ \ \ \ \ \ \ \ \ \ \ \ \ \ \ \ \ \ \ \ \ \ \ \ \ \ \
\ \ \ \ \ \ \ \ \textit{if} }n=0 \\ 
3x,\text{ \ \ \ \ \ \ \ \ \ \ \ \ \ \ \ \ \ \ \ \ \ \ \ \ \ \ \ \ \ \ \ \ \
\ \ \ \ \ \ \ if }n=1 \\ 
3xm_{n-1}\left( x\right) -2m_{n-2}\left( x\right) ,\text{\ \ \ \ \ \ \ \ \ \
\ \ if }n\geq 2%
\end{array}%
\right. ,  \label{3.1}
\end{equation}
\end{definition}

\begin{definition}
Let\emph{\ }$n\geq 0$\emph{\ }be integer, the recurrence relation of
Gaussian Mersenne Lucas polynomials $\left\{ Gm_{n}\left( x\right) \right\}
_{n\geq 0}$ is given as:%
\begin{equation}
Gm_{n}\left( x\right) :=\left\{ 
\begin{array}{c}
2+\frac{3i}{2}x,\text{ \ \ \ \ \ \ \ \ \ \ \ \ \ \ \ \ \ \ \ \ \ \ \ \ \ \ \
\ \ \ \ \ \ \ \textit{if} }n=0 \\ 
3x+2i,\text{ \ \ \ \ \ \ \ \ \ \ \ \ \ \ \ \ \ \ \ \ \ \ \ \ \ \ \ \ \ \ \ \
\ \ \ if }n=1 \\ 
3xGm_{n-1}\left( x\right) -2Gm_{n-2}\left( x\right) ,\text{\ \ \ \ \ \ \ \
if }n\geq 2%
\end{array}%
\right. ,  \label{3.2}
\end{equation}
\end{definition}

It is easily seen that:%
\begin{equation}
Gm_{n}\left( x\right) =m_{n}\left( x\right) +im_{n-1}\left( x\right) ,\text{
for }n\geq 1.  \label{3.3}
\end{equation}

The first few terms of Mersenne Lucas and Gaussian Mersenne Lucas
polynomials are as shown in the following table:%
\begin{equation*}
\begin{tabular}{|c|c|c|}
\hline
$n$ & $m_{n}\left( x\right) $ & $Gm_{n}\left( x\right) $ \\ \hline
$0$ & $2$ & $2+\frac{3i}{2}x$ \\ \hline
$1$ & $3x$ & $3x+2i$ \\ \hline
$2$ & $9x^{2}-4$ & $9x^{2}-4+3ix$ \\ \hline
$3$ & $27x^{3}-18x$ & $27x^{3}-18x+i\left( 9x^{2}-4\right) $ \\ \hline
$4$ & $81x^{4}-72x^{2}+8$ & $81x^{4}-72x^{2}+8+i\left( 27x^{3}-18x\right) $
\\ \hline
$5$ & $243x^{5}-270x^{3}+60x$ & $243x^{5}-270x^{3}+60x+i\left(
81x^{4}-72x^{2}+8\right) $ \\ \hline
$\vdots $ & $\vdots $ & $\vdots $ \\ \hline
\end{tabular}%
\end{equation*}

\begin{center}
\textbf{Table 2. }Mersenne Lucas and Gaussian Mersenne Lucas polynomials for 
$0\leq n\leq 5.$
\end{center}

Let $\lambda _{1}$ and $\lambda _{2}$ be the roots of the characteristic
equation $\lambda ^{2}-3x\lambda +2=0$ of the recurrence relations (3.1) and
(3.2). Then%
\begin{equation*}
\lambda _{1}=\frac{3x+\sqrt{9x^{2}-8}}{2}\text{ and }\lambda _{2}=\frac{3x-%
\sqrt{9x^{2}-8}}{2}.
\end{equation*}%
Note\ that:%
\begin{equation*}
\lambda _{1}+\lambda _{2}=3x,\text{ }\lambda _{1}\lambda _{2}=2\text{ and }%
\lambda _{1}-\lambda _{2}=\sqrt{9x^{2}-8}.
\end{equation*}

Now we can give the $n^{th}$ term of Mersenne Lucas and Gaussian Mersenne
Lucas polynomials.

\begin{theorem}
For $n\geq 0,$ the Binet's formulas for Mersenne Lucas and Gaussian Mersenne
Lucas polynomials are respectively given by:%
\begin{eqnarray}
m_{n}\left( x\right) &=&\lambda _{1}^{n}+\lambda _{2}^{n},  \label{3.4} \\
Gm_{n}\left( x\right) &=&\lambda _{1}^{n}+\lambda _{2}^{n}+i\left( \lambda
_{1}^{n-1}+\lambda _{2}^{n-1}\right) .  \label{3.5}
\end{eqnarray}
\end{theorem}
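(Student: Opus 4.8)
The plan is to prove the scalar identity $m_{n}(x)=\lambda_{1}^{n}+\lambda_{2}^{n}$ first, by induction on $n$, and then obtain the Gaussian formula as an immediate consequence of the relation (3.3), dealing separately with the base index where that relation fails.

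First I would set $L_{n}:=\lambda_{1}^{n}+\lambda_{2}^{n}$ and verify the two initial values using $\lambda_{1}+\lambda_{2}=3x$ and $\lambda_{1}\lambda_{2}=2$: indeed $L_{0}=2=m_{0}(x)$ and $L_{1}=\lambda_{1}+\lambda_{2}=3x=m_{1}(x)$. For the inductive step I would exploit that each root satisfies its own characteristic equation, $\lambda_{j}^{2}=3x\lambda_{j}-2$, so that $\lambda_{j}^{n}=3x\lambda_{j}^{n-1}-2\lambda_{j}^{n-2}$ for $j=1,2$ and $n\geq 2$. Summing over $j$ gives $L_{n}=3xL_{n-1}-2L_{n-2}$, which is exactly the recurrence (3.1). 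Since $L_{n}$ and $m_{n}(x)$ share the same two initial conditions and the same second-order recurrence, they agree for all $n\geq 0$, establishing (3.4).

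For the Gaussian formula (3.5) I would invoke (3.3), namely $Gm_{n}(x)=m_{n}(x)+i\,m_{n-1}(x)$ for $n\geq 1$, and substitute the expression (3.4) just proved for both $m_{n}(x)$ and $m_{n-1}(x)$. This yields
\[
Gm_{n}(x)=(\lambda_{1}^{n}+\lambda_{2}^{n})+i(\lambda_{1}^{n-1}+\lambda_{2}^{n-1}),
\]
which is precisely (3.5) for every $n\geq 1$.

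The only point requiring genuine care is the case $n=0$, because the relation (3.3) is asserted only for $n\geq 1$; this is the main (and rather mild) obstacle. I would handle it by direct evaluation: at $n=0$ the claimed right-hand side of (3.5) is $2+i(\lambda_{1}^{-1}+\lambda_{2}^{-1})$, and since $\lambda_{1}^{-1}+\lambda_{2}^{-1}=(\lambda_{1}+\lambda_{2})/(\lambda_{1}\lambda_{2})=3x/2$, this equals $2+\tfrac{3i}{2}x$, matching the initial value $Gm_{0}(x)$ prescribed in (3.2). With the $n=0$ case confirmed, the formula (3.5) holds for all $n\geq 0$, completing the argument.
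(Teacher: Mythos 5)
Your proposal is correct, and it is worth noting that it does not follow the paper's route. The paper proves (3.4) by positing the general solution $m_{n}(x)=c\lambda_{1}^{n}+d\lambda_{2}^{n}$ of the recurrence (3.1), solving the $2\times 2$ linear system coming from $m_{0}(x)=2$, $m_{1}(x)=3x$ to get $c=d=1$, and then dismisses (3.5) with ``the second equation can be proved similarly'' (i.e.\ the same ansatz with the Gaussian initial values). You instead verify the candidate $L_{n}=\lambda_{1}^{n}+\lambda_{2}^{n}$ directly by induction, using $\lambda_{j}^{2}=3x\lambda_{j}-2$ to show $L_{n}$ satisfies the same recurrence and initial data as $m_{n}(x)$; this is more elementary and self-contained, since it does not presuppose the structure theory of second-order linear recurrences with distinct characteristic roots. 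For the Gaussian case you take a genuinely different path, deducing (3.5) from the relation $Gm_{n}(x)=m_{n}(x)+i\,m_{n-1}(x)$ of (3.3) rather than re-solving for coefficients, and — unlike the paper — you explicitly check the boundary case $n=0$, where (3.3) is not asserted and where (3.5) involves $\lambda_{1}^{-1}+\lambda_{2}^{-1}=\frac{\lambda_{1}+\lambda_{2}}{\lambda_{1}\lambda_{2}}=\frac{3x}{2}$, correctly recovering $Gm_{0}(x)=2+\frac{3i}{2}x$. That last verification closes a small gap the paper leaves open, since the theorem is claimed for all $n\geq 0$.
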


\begin{proof}
We know that the general solution for the recurrence relation of Mersenne
Lucas polynomials given by:%
\begin{equation*}
m_{n}\left( x\right) =c\lambda _{1}^{n}+d\lambda _{2}^{n},
\end{equation*}%
where $c$ and $d$ are the coefficients.

Using the initial values $m_{0}\left( x\right) =2$ and $m_{1}\left( x\right)
=3x,$ we obtain:%
\begin{equation*}
c+d=2\text{ and }c\lambda _{1}+d\lambda _{2}=3x.
\end{equation*}%
By these equalities we get:%
\begin{equation*}
c=d=1\text{ }.
\end{equation*}%
Therefore:%
\begin{equation*}
m_{n}\left( x\right) =\lambda _{1}^{n}+\lambda _{2}^{n}.
\end{equation*}%
Which\ is the first equation. Second equation can be proved similarly\textit{%
.}
\end{proof}

The negative extensions of Mersenne Lucas and Gaussian Mersenne Lucas
polynomials $\left( m_{-n}\left( x\right) \right) $\ and $\left(
Gm_{-n}\left( x\right) \right) $\ gives in the next theorem.

\begin{theorem}
Let $n$ be any positive integer. Then we have:%
\begin{eqnarray}
m_{-n}\left( x\right) &=&\frac{m_{n}\left( x\right) }{2^{n}},  \label{3.6} \\
Gm_{-n}\left( x\right) &=&\frac{1}{2^{n}}\left[ m_{n}\left( x\right) +\frac{i%
}{2}m_{n+1}\left( x\right) \right] .  \label{3.7}
\end{eqnarray}
\end{theorem}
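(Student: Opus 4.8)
The plan is to derive both identities straight from the Binet formulas (\ref{3.4}) and (\ref{3.5}), exploiting the single algebraic fact that the product of the characteristic roots is $\lambda_1\lambda_2=2$. This mirrors the argument already used for the numerical case in Theorem analogous to (\ref{2.4}), so I expect the polynomial version to go through with only cosmetic changes.

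First I would handle (\ref{3.6}). Substituting $-n$ for $n$ in (\ref{3.4}) gives $m_{-n}(x)=\lambda_1^{-n}+\lambda_2^{-n}$. Putting this over the common denominator $(\lambda_1\lambda_2)^n$ and using $\lambda_1\lambda_2=2$ yields
\begin{equation*}
m_{-n}(x)=\frac{\lambda_2^{n}+\lambda_1^{n}}{(\lambda_1\lambda_2)^{n}}=\frac{m_n(x)}{2^{n}},
\end{equation*}
which is exactly (\ref{3.6}). This step is purely formal and should present no difficulty.

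For (\ref{3.7}) I would avoid appealing to the relation (\ref{3.3}) at a negative index (whose validity for $n\le 0$ is not something the excerpt has established) and instead work directly from (\ref{3.5}). Replacing $n$ by $-n$ there gives
\begin{equation*}
Gm_{-n}(x)=\bigl(\lambda_1^{-n}+\lambda_2^{-n}\bigr)+i\bigl(\lambda_1^{-n-1}+\lambda_2^{-n-1}\bigr).
\end{equation*}
The first bracket is $m_n(x)/2^{n}$ by (\ref{3.6}), and the second bracket is $m_{n+1}(x)/2^{n+1}$ by the same identity with $n$ replaced by $n+1$. Collecting the factor $1/2^{n}$ then produces $\tfrac{1}{2^{n}}\bigl[m_n(x)+\tfrac{i}{2}m_{n+1}(x)\bigr]$, as claimed.

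The only genuine point to watch is the bookkeeping of exponents in the second bracket, namely confirming that $\lambda_1^{-n-1}+\lambda_2^{-n-1}=m_{n+1}(x)/2^{n+1}$ rather than $m_{n+1}(x)/2^{n}$; this is the one place where an off-by-one slip could enter, but it is controlled by applying (\ref{3.6}) verbatim at index $n+1$. Everything else is routine simplification, so I anticipate no substantive obstacle.
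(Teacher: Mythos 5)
Your argument is correct. For (\ref{3.6}) it coincides with the paper's proof: substitute $-n$ into the Binet formula (\ref{3.4}) and use $\lambda_{1}\lambda_{2}=2$. For (\ref{3.7}) you take a slightly different route: the paper applies the decomposition (\ref{3.3}), $Gm_{n}(x)=m_{n}(x)+im_{n-1}(x)$, at the negative index $-n$ and then invokes (\ref{3.6}) twice, whereas you substitute $-n$ directly into the Binet formula (\ref{3.5}) and identify the two brackets. The two routes amount to the same computation, but yours is marginally tidier in its logical bookkeeping, since it only requires extending the Binet formula to negative indices --- which is exactly how the paper implicitly defines the negative extension (cf.\ the passage surrounding (\ref{1.1}) and (\ref{1.2})) --- rather than also extending (\ref{3.3}), which was stated only for $n\geq 1$. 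Your exponent check in the second bracket, $\lambda_{1}^{-n-1}+\lambda_{2}^{-n-1}=m_{n+1}(x)/2^{n+1}$, is the right thing to verify and is correct.
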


\begin{proof}
Replacing $\left( n\right) $\ by $(-n)$\ in the Binet's formula (3.4), we
can write:%
\begin{eqnarray*}
m_{-n}\left( x\right) &=&\lambda _{1}^{-n}+\lambda _{2}^{-n} \\
&=&\frac{1}{\lambda _{1}^{n}}+\frac{1}{\lambda _{2}^{n}} \\
&=&\frac{\lambda _{1}^{n}+\lambda _{2}^{n}}{2^{n}} \\
&=&\frac{m_{n}\left( x\right) }{2^{n}}.
\end{eqnarray*}%
Which\ is the negative extension of Mersenne Lucas polynomials.\newline
According the Eq. (3.3), we get:%
\begin{eqnarray*}
Gm_{-n}\left( x\right) &=&m_{-n}\left( x\right) +im_{-n-1}\left( x\right) \\
&=&\frac{m_{n}\left( x\right) }{2^{n}}+i\frac{m_{n+1}\left( x\right) }{%
2^{n+1}} \\
&=&\frac{1}{2^{n}}\left[ m_{n}\left( x\right) +\frac{i}{2}m_{n+1}\left(
x\right) \right] .
\end{eqnarray*}%
Which is the negative extension of Gaussian Mersenne Lucas polynomials.
\end{proof}

\begin{theorem}
For $n\in 
\mathbb{N}
,$ the generating functions of Mersenne Lucas and Gaussian Mersenne Lucas
polynomials are respectively given by:%
\begin{eqnarray}
\sum_{n=0}^{\infty }m_{n}\left( x\right) z^{n} &=&\frac{2-3xz}{1-3xz+2z^{2}},
\label{3.8} \\
\sum_{n=0}^{\infty }Gm_{n}\left( x\right) z^{n} &=&\frac{4+3ix+\left(
i\left( 4-9x^{2}\right) -6x\right) z}{2-6xz+4z^{2}}.  \label{3.9}
\end{eqnarray}
\end{theorem}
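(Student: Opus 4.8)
The plan is to mirror the generating-function argument already used for the numbers (the proof of Eq.~(2.6)): treat each power series as a formal object, split off the first two terms, substitute the defining recurrence into the tail, reindex to recover the full series multiplied by the appropriate polynomial factor, and solve the resulting linear equation for the series.

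For the first identity I would set $f(z)=\sum_{n\ge 0}m_{n}(x)z^{n}$ and write $f(z)=m_{0}(x)+m_{1}(x)z+\sum_{n\ge 2}\bigl(3xm_{n-1}(x)-2m_{n-2}(x)\bigr)z^{n}$, using the recurrence (3.1). Reindexing the two tail sums gives $3xz\sum_{n\ge 1}m_{n}(x)z^{n}-2z^{2}f(z)=3xz\bigl(f(z)-m_{0}(x)\bigr)-2z^{2}f(z)$. Substituting $m_{0}(x)=2$ and $m_{1}(x)=3x$ and collecting the terms in $f(z)$ yields $(1-3xz+2z^{2})f(z)=2-3xz$, which is the claimed formula (3.8) after dividing.

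For the second identity the computation is identical in structure, with $g(z)=\sum_{n\ge 0}Gm_{n}(x)z^{n}$ and the recurrence (3.2). The only genuinely new step is evaluating the constant and linear coefficients of the numerator: the constant term is $Gm_{0}(x)=2+\tfrac{3i}{2}x$ and the linear coefficient is $Gm_{1}(x)-3xGm_{0}(x)=(3x+2i)-3x\bigl(2+\tfrac{3i}{2}x\bigr)=-3x+i\bigl(2-\tfrac{9}{2}x^{2}\bigr)$. Thus $(1-3xz+2z^{2})g(z)=2+\tfrac{3i}{2}x+\bigl(i(2-\tfrac{9}{2}x^{2})-3x\bigr)z$, and clearing denominators by multiplying numerator and denominator by $2$ gives exactly (3.9).

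The only place requiring care—and the ``obstacle,'' such as it is—is keeping the complex arithmetic in $Gm_{1}(x)-3xGm_{0}(x)$ straight and remembering the factor of $2$ that converts $1-3xz+2z^{2}$ into $2-6xz+4z^{2}$; the rest is routine bookkeeping. As a cross-check (and an alternative derivation) one could instead combine the first identity with relation (3.3): writing $\sum_{n\ge 0}Gm_{n}(x)z^{n}=Gm_{0}(x)-m_{0}(x)+(1+iz)f(z)$ and substituting the closed form for $f(z)$, the $z^{2}$ contributions in the numerator cancel and one recovers the same expression, confirming (3.9). I expect to present the direct recurrence route, however, since it matches the style of Eq.~(2.6).
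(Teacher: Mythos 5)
Your proposal is correct and follows essentially the same route as the paper: split off the first two terms, substitute the recurrence into the tail, reindex, and solve the linear equation for the series. In fact you supply slightly more than the paper does, since the paper proves only (3.8) in detail and dismisses (3.9) with ``using the same procedure,'' whereas you carry out the key computation $Gm_{1}(x)-3xGm_{0}(x)=i\left(2-\tfrac{9}{2}x^{2}\right)-3x$ explicitly and add a valid cross-check via relation (3.3).
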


\begin{proof}
The Mersenne Lucas polynomials can be considered as the coefficients of the
formal power series:%
\begin{equation*}
g(z)=\sum_{n=0}^{\infty }m_{n}\left( x\right) z^{n}.
\end{equation*}%
Using the initial conditions, we get:%
\begin{eqnarray*}
g(z) &=&m_{0}\left( x\right) +m_{1}\left( x\right) z+\sum_{n=2}^{\infty
}m_{n}\left( x\right) z^{n} \\
&=&m_{0}\left( x\right) +m_{1}\left( x\right) z+\sum_{n=2}^{\infty }\left(
3xm_{n-1}\left( x\right) -2m_{n-2}\left( x\right) \right) z^{n} \\
&=&m_{0}\left( x\right) +m_{1}\left( x\right) z+3xz\sum_{n=1}^{\infty
}m_{n}\left( x\right) z^{n}-2z^{2}\sum_{n=0}^{\infty }m_{n}\left( x\right)
z^{n} \\
&=&m_{0}\left( x\right) +\left( m_{1}\left( x\right) -3xm_{0}\left( x\right)
\right) z+3xz\sum_{n=0}^{\infty }m_{n}\left( x\right)
z^{n}-2z^{2}\sum_{n=0}^{\infty }m_{n}\left( x\right) z^{n} \\
&=&2-3xz+\left( 3xz-2z^{2}\right) g(z).
\end{eqnarray*}%
Hence, we obtain:%
\begin{equation*}
\left( 1-3xz+2z^{2}\right) g(z)=2-3xz.
\end{equation*}%
Therefore:%
\begin{equation*}
g(z)=\frac{2-3xz}{1-3xz+2z^{2}}.
\end{equation*}

Which\textit{\ }gives\textit{\ }equation (3.8). Using the same procedure, we
can obtain equation (3.9).
\end{proof}

\begin{theorem}
For $n\in 
\mathbb{N}
,$ the symmetric functions\textbf{\ }of Mersenne Lucas and Gaussian Mersenne
Lucas polynomials are respectively given by:%
\begin{eqnarray}
m_{n}\left( x\right) &=&2S_{n}\left( \lambda _{1}+\left[ -\lambda _{2}\right]
\right) -3xS_{n-1}\left( \lambda _{1}+\left[ -\lambda _{2}\right] \right) ,
\label{3.10} \\
Gm_{n}\left( x\right) &=&\left( 2+\frac{3ix}{2}\right) S_{n}\left( \lambda
_{1}+\left[ -\lambda _{2}\right] \right) +\left( i\left( 2-\frac{9}{2}%
x^{2}\right) -3x\right) S_{n-1}\left( \lambda _{1}+\left[ -\lambda _{2}%
\right] \right) .  \label{3.11}
\end{eqnarray}%
with $\lambda _{1}=\frac{3x+\sqrt{9x^{2}-8}}{2}$ and $\lambda _{2}=\frac{3x-%
\sqrt{9x^{2}-8}}{2}.$
\end{theorem}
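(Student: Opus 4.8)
The plan is to transcribe the proof of the numbers formula (2.7) with the variable $x$ now carried along, comparing coefficients in the generating functions (3.8) and (3.9) against a suitable linear combination of the symmetric-function series. I begin from the two identities (2.8) and (2.9) for $S_{n}\left( \lambda_{1}+\left[ -\lambda_{2}\right] \right)$ and its shift $S_{n-1}\left( \lambda_{1}+\left[ -\lambda_{2}\right] \right)$, and I impose the same formal substitution used for the numbers, namely $\lambda_{1}-\lambda_{2}=3x$ and $\lambda_{1}\lambda_{2}=-2$ (the only change from the numbers case being $3\mapsto 3x$). This turns the common denominator $1-\left( \lambda_{1}-\lambda_{2}\right) z-\lambda_{1}\lambda_{2}z^{2}$ into $1-3xz+2z^{2}$, so that
\begin{align*}
\sum_{n=0}^{\infty }S_{n}\left( \lambda_{1}+\left[ -\lambda_{2}\right] \right) z^{n} &=\frac{1}{1-3xz+2z^{2}}, \\
\sum_{n=0}^{\infty }S_{n-1}\left( \lambda_{1}+\left[ -\lambda_{2}\right] \right) z^{n} &=\frac{z}{1-3xz+2z^{2}}.
\end{align*}

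For the Mersenne Lucas polynomials I would form the combination $2\times(\text{first series})-3x\times(\text{second series})$; its right-hand side collapses to $\dfrac{2-3xz}{1-3xz+2z^{2}}$, which is exactly the generating function (3.8). Equating the coefficients of $z^{n}$ on both sides then yields (3.10).

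For the Gaussian polynomials I would first rewrite the generating function (3.9) over the denominator $1-3xz+2z^{2}$: since $2-6xz+4z^{2}=2\left( 1-3xz+2z^{2}\right)$, dividing numerator and denominator of (3.9) by $2$ produces the numerator $\left( 2+\frac{3ix}{2}\right) +\left( i\left( 2-\frac{9}{2}x^{2}\right) -3x\right) z$, using $\frac{i\left( 4-9x^{2}\right) }{2}=i\left( 2-\frac{9}{2}x^{2}\right)$. I then form the combination $\left( 2+\frac{3ix}{2}\right) \times(\text{first series})+\left( i\left( 2-\frac{9}{2}x^{2}\right) -3x\right) \times(\text{second series})$, whose right-hand side is precisely this rewritten form of (3.9); comparing the coefficients of $z^{n}$ delivers (3.11).

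The computation is entirely elementary, so I do not anticipate a genuine obstacle. The two points that need attention are the sign convention in the formal substitution $\lambda_{1}\lambda_{2}=-2$ (needed so that (2.8) produces $+2z^{2}$ in the denominator, matching (3.8)--(3.9), rather than the true product $\lambda_{1}\lambda_{2}=2$ of the characteristic roots), and the clearing of the factor $2$ from the denominator of (3.9) together with the identity $i\left( 4-9x^{2}\right) /2=i\left( 2-\tfrac{9}{2}x^{2}\right)$ that makes the numerator align with the claimed coefficients.
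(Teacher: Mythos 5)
Your proposal is correct and follows essentially the same route as the paper: substitute $\lambda_{1}-\lambda_{2}=3x$, $\lambda_{1}\lambda_{2}=-2$ into the series (2.8)--(2.9), form the linear combinations with coefficients $2,\,-3x$ and $2+\tfrac{3ix}{2},\ i\left(2-\tfrac{9}{2}x^{2}\right)-3x$, match against the generating functions (3.8)--(3.9), and compare coefficients of $z^{n}$. The only difference is that you make explicit the clearing of the factor $2$ from the denominator of (3.9), which the paper leaves implicit.
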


\begin{proof}
Setting $\lambda _{1}-\lambda _{2}=3x$ and $\lambda _{1}\lambda _{2}=-2$ in
the Eqs. (2.8) and (2.9), we get:%
\begin{eqnarray}
\sum_{n=0}^{\infty }S_{n}\left( \lambda _{1}+\left[ -\lambda _{2}\right]
\right) z^{n} &=&\frac{1}{1-3xz+2z^{2}},  \label{3.12} \\
\sum_{n=0}^{\infty }S_{n-1}\left( \lambda _{1}+\left[ -\lambda _{2}\right]
\right) z^{n} &=&\frac{z}{1-3xz+2z^{2}}.  \label{3.13}
\end{eqnarray}

Multiplying the equation (3.12) by $\left( 2\right) $ and adding it to the
equation obtained by (3.13) multiplying by $\left( -3x\right) $, then we
obtain the following equality:%
\begin{equation*}
\sum_{n=0}^{\infty }\left( 2S_{n}\left( \lambda _{1}+\left[ -\lambda _{2}%
\right] \right) -3xS_{n-1}\left( \lambda _{1}+\left[ -\lambda _{2}\right]
\right) \right) z^{n}=\frac{2-3xz}{1-3xz+2z^{2}}=\sum_{n=0}^{\infty
}m_{n}\left( x\right) z^{n}.
\end{equation*}%
Comparing of the coefficients of $z^{n}$ we obtain:%
\begin{equation*}
m_{n}\left( x\right) =2S_{n}\left( \lambda _{1}+\left[ -\lambda _{2}\right]
\right) -3xS_{n-1}\left( \lambda _{1}+\left[ -\lambda _{2}\right] \right) .
\end{equation*}%
And multiplying the equation (3.12) by $\left( 2+\frac{3ix}{2}\right) $ and
adding it to the equation obtained by (3.13) multiplying by $\left( i\left(
2-\frac{9}{2}x^{2}\right) -3x\right) $, then we obtain the following
equality:%
\begin{eqnarray*}
\sum_{n=0}^{\infty }\left( \left( 2+\frac{3ix}{2}\right) S_{n}\left( \lambda
_{1}+\left[ -\lambda _{2}\right] \right) +\left( i\left( 2-\frac{9}{2}%
x^{2}\right) -3x\right) S_{n-1}\left( \lambda _{1}+\left[ -\lambda _{2}%
\right] \right) \right) z^{n} &=&\frac{4+3ix+\left( i\left( 4-9x^{2}\right)
-6x\right) z}{2-6xz+4z^{2}} \\
&=&\sum_{n=0}^{\infty }Gm_{n}\left( x\right) z^{n}.
\end{eqnarray*}%
Comparing of the coefficients of $z^{n}$ we obtain:%
\begin{equation*}
Gm_{n}\left( x\right) =\left( 2+\frac{3ix}{2}\right) S_{n}\left( \lambda
_{1}+\left[ -\lambda _{2}\right] \right) +\left( i\left( 2-\frac{9}{2}%
x^{2}\right) -3x\right) S_{n-1}\left( \lambda _{1}+\left[ -\lambda _{2}%
\right] \right) .
\end{equation*}

This completes the proof.
\end{proof}

Now, we aim to give the explicit formulas for Mersenne Lucas and Gaussian
Mersenne Lucas polynomials. For this purpose, we shall prove the following
theorem.

\begin{theorem}
The explicit formulas for Mersenne Lucas and Gaussian Mersenne Lucas
polynomials are respectively given by:%
\begin{equation}
m_{n}\left( x\right) =\tsum\limits_{j=0}^{\left\lfloor \frac{n}{2}%
\right\rfloor }\left( -1\right) ^{j}\frac{n}{n-j}\left( 
\begin{array}{c}
n-j \\ 
j%
\end{array}%
\right) \left( 3x\right) ^{n-2j}2^{j},  \label{3.14}
\end{equation}%
\begin{equation}
Gm_{n}\left( x\right) =\tsum\limits_{j=0}^{\left\lfloor \frac{n}{2}%
\right\rfloor }\left( -1\right) ^{j}\frac{n}{n-j}\left( 
\begin{array}{c}
n-j \\ 
j%
\end{array}%
\right) \left( 3x\right) ^{n-2j}2^{j}+i\tsum\limits_{j=0}^{\left\lfloor 
\frac{n-1}{2}\right\rfloor }\left( -1\right) ^{j}\frac{n-1}{n-j-1}\left( 
\begin{array}{c}
n-j-1 \\ 
j%
\end{array}%
\right) \left( 3x\right) ^{n-2j-1}2^{j}.  \label{3.15}
\end{equation}
\end{theorem}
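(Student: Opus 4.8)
The plan is to establish the Mersenne Lucas polynomial formula \eqref{3.14} first and then deduce the Gaussian formula \eqref{3.15} from it through the bridging relation \eqref{3.3}. For \eqref{3.14} I would argue by strong induction on $n$ using the recurrence \eqref{3.1}. Writing $c(n,j)=\frac{n}{n-j}\binom{n-j}{j}$ and denoting the proposed right-hand side by $a_n(x)=\sum_{j=0}^{\lfloor n/2\rfloor}(-1)^j c(n,j)(3x)^{n-2j}2^j$, the base cases are checked directly: $a_1(x)=c(1,0)\,3x=3x=m_1(x)$ and $a_2(x)=c(2,0)(3x)^2-2c(2,1)=9x^2-4=m_2(x)$. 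The inductive step then amounts to verifying $a_n(x)=3x\,a_{n-1}(x)-2a_{n-2}(x)$, so that $a_n(x)$ obeys the same recurrence and initial data as $m_n(x)$.

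Collecting the coefficient of $(-1)^j(3x)^{n-2j}2^j$ on the right of $3x\,a_{n-1}(x)-2a_{n-2}(x)$, where the factor $3x$ absorbs into $(3x)^{(n-1)-2j}$ and, after the shift $j\mapsto j-1$, the $-2$ absorbs into $2^{j}$, the whole induction collapses to the single combinatorial identity
\[
\frac{n}{n-j}\binom{n-j}{j}=\frac{n-1}{n-1-j}\binom{n-1-j}{j}+\frac{n-2}{n-j-1}\binom{n-j-1}{j-1},
\]
that is, $c(n,j)=c(n-1,j)+c(n-2,j-1)$, together with a careful matching of the summation ranges $\lfloor n/2\rfloor$, $\lfloor (n-1)/2\rfloor$, $\lfloor (n-2)/2\rfloor$ across the two parities of $n$ (the extremal terms, where these floors disagree, must be checked either to vanish or to coincide). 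I expect this identity, which is the Girard--Waring recursion for the power sum $\lambda_1^{n}+\lambda_2^{n}$ specialized to $\lambda_1+\lambda_2=3x$ and $\lambda_1\lambda_2=2$, to be the only genuine computation; it follows from Pascal's rule after clearing the denominators $n-j$, $n-1-j$, $n-j-1$. An equivalent and perhaps cleaner route is to invoke the classical Girard--Waring expansion of $\lambda_1^{n}+\lambda_2^{n}$ in the elementary symmetric functions and then substitute the Binet form \eqref{3.4} directly, which bypasses the explicit induction at the cost of citing that expansion. The coefficient identity is the main obstacle; everything after it is routine.

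Once \eqref{3.14} is in hand, \eqref{3.15} is immediate. Applying \eqref{3.14} with $n$ replaced by $n-1$ gives
\[
m_{n-1}(x)=\sum_{j=0}^{\lfloor (n-1)/2\rfloor}(-1)^j\frac{n-1}{n-j-1}\binom{n-j-1}{j}(3x)^{n-2j-1}2^j,
\]
since $(n-1)-j=n-j-1$ and $(n-1)-2j=n-2j-1$. Substituting this together with \eqref{3.14} into the relation $Gm_n(x)=m_n(x)+i\,m_{n-1}(x)$ of \eqref{3.3} reproduces \eqref{3.15} verbatim, the first sum arising from $m_n(x)$ and the $i$-multiplied sum from $m_{n-1}(x)$. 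I would note that \eqref{3.3}, and hence \eqref{3.15}, holds for $n\geq 1$ (the constant case $Gm_0(x)=2+\tfrac{3i}{2}x$ is not captured by the formula and must be treated separately), which exactly parallels the numerical statement \eqref{2.5}.
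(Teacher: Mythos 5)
Your proposal is correct, but it reaches (3.14) by a genuinely different route than the paper. The paper does not induct on the recurrence: it quotes from an external (and still unpublished) reference an explicit expansion of $S_{n}\left( \lambda _{1}+\left[ -\lambda _{2}\right] \right)$ in powers of $\lambda_1-\lambda_2$ and $\lambda_1\lambda_2$, specializes it at $\lambda_1-\lambda_2=3x$, $\lambda_1\lambda_2=-2$, and feeds the result into the symmetric-function representation $m_{n}(x)=2S_{n}\left( \lambda _{1}+\left[ -\lambda _{2}\right] \right)-3xS_{n-1}\left( \lambda _{1}+\left[ -\lambda _{2}\right] \right)$ already proved as Eq. (3.10); the whole computation then collapses to the single identity $2\binom{n-j}{j}-\binom{n-j-1}{j}=\frac{n}{n-j}\binom{n-j}{j}$, with no parity case analysis. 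Your strong induction instead rests on the Girard--Waring coefficient recursion $c(n,j)=c(n-1,j)+c(n-2,j-1)$ for $c(n,j)=\frac{n}{n-j}\binom{n-j}{j}$, which is correct (it follows from Pascal's rule via $\frac{n}{n-j}\binom{n-j}{j}=\binom{n-j}{j}+\binom{n-j-1}{j-1}$), at the cost of the boundary bookkeeping at the extremal $j$ that you rightly flag. What your version buys is self-containedness: it uses only the defining recurrence (3.1) and needs neither Theorem (3.10) nor the cited expansion of $S_n$; what the paper's version buys is brevity, given the symmetric-function infrastructure it has already set up. The passage from (3.14) to (3.15) via $Gm_{n}(x)=m_{n}(x)+im_{n-1}(x)$ is identical in both arguments, and your explicit restriction to $n\geq 1$ (the relation (3.3) is only asserted for $n\geq 1$, and the factor $\frac{n}{n-j}$ degenerates at $n=j=0$) is a correct precision that the paper leaves implicit.
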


\begin{proof}
By \cite{NN}, we have:%
\begin{equation}
S_{n}\left( \lambda _{1}+\left[ -\lambda _{2}\right] \right)
=\tsum\limits_{j=0}^{\left\lfloor \frac{n}{2}\right\rfloor }\left( 
\begin{array}{c}
n-j \\ 
j%
\end{array}%
\right) \left( \lambda _{1}-\lambda _{2}\right) ^{n-2j}\left( \lambda
_{1}\lambda _{2}\right) ^{j}.  \label{3.16}
\end{equation}%
\begin{equation}
S_{n-1}\left( \lambda _{1}+\left[ -\lambda _{2}\right] \right)
=\tsum\limits_{j=0}^{\left\lfloor \frac{n-1}{2}\right\rfloor }\left( 
\begin{array}{c}
n-j-1 \\ 
j%
\end{array}%
\right) \left( \lambda _{1}-\lambda _{2}\right) ^{n-2j-1}\left( \lambda
_{1}\lambda _{2}\right) ^{j}.  \label{3.17}
\end{equation}%
Setting $\left\{ 
\begin{array}{c}
\lambda _{1}-\lambda _{2}=3x \\ 
\lambda _{1}\lambda _{2}=-2%
\end{array}%
\right. $ in the Eqs. (3.16) and (3.17), we get:%
\begin{equation}
S_{n}\left( \lambda _{1}+\left[ -\lambda _{2}\right] \right)
=\dsum\limits_{j=0}^{\left\lfloor \frac{n}{2}\right\rfloor }\left( -1\right)
^{j}\left( 
\begin{array}{c}
n-j \\ 
j%
\end{array}%
\right) \left( 3x\right) ^{n-2j}2^{j},  \label{3.18}
\end{equation}%
\begin{equation}
S_{n-1}\left( \lambda _{1}+\left[ -\lambda _{2}\right] \right)
=\dsum\limits_{j=0}^{\left\lfloor \frac{n-1}{2}\right\rfloor }\left(
-1\right) ^{j}\left( 
\begin{array}{c}
n-j-1 \\ 
j%
\end{array}%
\right) \left( 3x\right) ^{n-2j-1}2^{j}.  \label{3.19}
\end{equation}

On the other hand, by Eq. (3.10) we have:%
\begin{equation*}
m_{n}\left( x\right) =2S_{n}\left( \lambda _{1}+\left[ -\lambda _{2}\right]
\right) -3xS_{n-1}\left( \lambda _{1}+\left[ -\lambda _{2}\right] \right) .
\end{equation*}

Multiplying the equation (3.18) by $\left( 2\right) $ and adding it to the
equation obtained by (3.19) multiplying by $\left( -3x\right) $, then we get:%
\begin{eqnarray*}
m_{n}\left( x\right) &=&2\dsum\limits_{j=0}^{\left\lfloor \frac{n}{2}%
\right\rfloor }\left( -1\right) ^{j}\left( 
\begin{array}{c}
n-j \\ 
j%
\end{array}%
\right) \left( 3x\right) ^{n-2j}2^{j}-3x\dsum\limits_{j=0}^{\left\lfloor 
\frac{n-1}{2}\right\rfloor }\left( -1\right) ^{j}\left( 
\begin{array}{c}
n-j-1 \\ 
j%
\end{array}%
\right) \left( 3x\right) ^{n-2j-1}2^{j} \\
&=&2\dsum\limits_{j=0}^{\left\lfloor \frac{n}{2}\right\rfloor }\left(
-1\right) ^{j}\left( 
\begin{array}{c}
n-j \\ 
j%
\end{array}%
\right) \left( 3x\right) ^{n-2j}2^{j}-\dsum\limits_{j=0}^{\left\lfloor \frac{%
n-1}{2}\right\rfloor }\left( -1\right) ^{j}\left( 
\begin{array}{c}
n-j-1 \\ 
j%
\end{array}%
\right) \left( 3x\right) ^{n-2j}2^{j} \\
&=&\dsum\limits_{j=0}^{\left\lfloor \frac{n}{2}\right\rfloor }\left(
-1\right) ^{j}\frac{n}{n-j}\left( 
\begin{array}{c}
n-j \\ 
j%
\end{array}%
\right) \left( 3x\right) ^{n-2j}2^{j}.
\end{eqnarray*}%
Which is the explicit formula of Mersenne Lucas polynomials.\newline
By using the Eq. (3.3), we easily obtain: 
\begin{eqnarray*}
Gm_{n}\left( x\right) &=&m_{n}\left( x\right) +im_{n-1}\left( x\right) \\
&=&\dsum\limits_{j=0}^{\left\lfloor \frac{n}{2}\right\rfloor }\left(
-1\right) ^{j}\frac{n}{n-j}\left( 
\begin{array}{c}
n-j \\ 
j%
\end{array}%
\right) \left( 3x\right) ^{n-2j}2^{j}+i\tsum\limits_{j=0}^{\left\lfloor 
\frac{n-1}{2}\right\rfloor }\left( -1\right) ^{j}\frac{n-1}{n-j-1}\left( 
\begin{array}{c}
n-j-1 \\ 
j%
\end{array}%
\right) \left( 3x\right) ^{n-2j-1}2^{j}.
\end{eqnarray*}

Which is the explicit formula of Gaussian Mersenne Lucas polynomials.
\end{proof}

\section{\textbf{Conclusion}}

In this study, we defined Gaussian Mersenne Lucas numbers, Mersenne Lucas
polynomials and Gaussian Mersenne Lucas polynomials. Then we gave a formula
for the Gaussian Mersenne Lucas numbers by using the Mersenne Lucas numbers.
The Gaussian Mersenne Lucas polynomials are also given by using the Mersenne
Lucas polynomials. Their Binet's formulas, explicit formulas, generating
functions, symmetric functions and negative extensions are obtained.

\end{document}